\newtheorem{thm}{Theorem}[section]
\newtheorem{lem}[thm]{Lemma}
\newtheorem{prop}[thm]{Proposition}
\newtheorem*{proof}{proof}[section]
\newtheorem{exmp}{Example}[section]
\newtheorem{rem}{Remark}
\begin{document}
\begin{frontmatter}

\title{\bf\Large Maximum Likelihood Estimation of Stochastic Differential Equations with Random Effects Driven by Fractional Brownian Motion}

%% Group authors per affiliation:
%\author{Elsevier\fnref{myfootnote}}
%\address{Radarweg 29, Amsterdam}
%\fntext[myfootnote]{Since 1880.}

%% or include affiliations in footnotes:
\author{Min Dai\fnref{addr1}}
\ead{mindai@hust.edu.cn}

\author{Jinqiao Duan\fnref{addr2}}
\ead{duan@iit.edu}

\author{Junjun Liao\fnref{addr1}}
\ead{liaojunjun@hust.edu.cn}

\author{Xiangjun Wang\fnref{addr1}\corref{mycorrespondingauthor}}
\cortext[mycorrespondingauthor]{Corresponding author}
\ead{xjwang@hust.edu.cn}

\address[addr1]{School of Mathematics and Statistics, \\
Huazhong University of Sciences and Technology,Wuhan 430074, China \\}
\address[addr2]{Department of Applied Mathematics, Illinois Institute of Technology, Chicago, IL 60616, USA}

\begin{abstract}
  Stochastic differential equations and stochastic dynamics are good models to describe stochastic phenomena in real world. In this paper, we study $N$ independent stochastic processes $X_i(t)$ with real entries and the processes are determined by the stochastic differential equations with drift term relying on some random effects. We obtain the Girsanov-type formula of the stochastic differential equation driven by Fractional Brownian Motion through kernel transformation. Under some assumptions of the random effect, we estimate the parameter estimators by the maximum likelihood estimation and give some numerical simulations for the discrete observations. Results show that for the different $H$, the parameter estimator is closer to the true value as the amount of data increases. \\
\end{abstract}

\begin{keyword}
Fractional Brownian Motion, Stochastic Differential Equations, Girsanov-type Formula, Random Effects, Maximum Likelihood Estimation
\end{keyword}

\end{frontmatter}

\linenumbers

\section{Introduction}

With the development of big data era, statistical analysis of data becomes more and more important. How to model these data effeciently becomes our new challenge. As is known, the generalized linear model \cite{McCullagh} is based on the likelihood method for regression analysis of various output results. However, the dependence between variables and the variation of variables over time are not taken into account. So it is more appropriate to consider the mixed effect model. The mixed effect model is very popular now in biomedical field \cite{Davidian,Pinheiro}, and its theory has a good development in deterministic model (no systematic error) \cite{Breslow,Diggle,Lindstrom,Vonesh}. As deterministic ordinary differential equation models do not account for the noise that often occurs in the system, to understand the dynamic behavior of these noise components, researchers have begun to study stochastic differential equation models with mixed effects. We know that stochastic differential equation models have a wide range of applications in the fields of physics, chemistry, biology, communication and so on. Stochastic differential equation models of the mixed effects have also been studied \cite{Overgaard}. For example, maximum likelihood estimation of the stochastic differential equations with random effects has been discussed \cite{Delattre}. Maximum likelihood estimation is a common parameter estimation method, but the maximum likelihood estimation of random effect parameters cannot be directly obtained because the likelihood function is seldom expressed. Ditlevsen and De Gaetano \cite{Ditlevsen} have shown that we can obtain an explicit expression of parameters from the likelihood function for some special situations. For general mixed stochastic differential equations, the approximation has been proved possible \cite{Picchini}. Delattre et al. \cite{Delattre} have studied mixed-effects stochastic differential equations with only the drift term having the random effects and then derived the exact expression of likelihood.

Stochastic differential equation models are always driven by Brownian motion, there is a kind of nearly completely random phenomenon in nature and society, however, has incremental stability, self-similarity and self-correlation. Especially, in the frequency domain, its power spectral density basically conforms to the polynomial decay law of $1/f$ within a certain frequency range, so it is called $1/f$ family random process. Fractional Brownian Motion (FBM) is a kind of the most widely used models, which is proposed by Benoit Mandelbrot and Van Ness. In addition, FBM possess some properties so that it can be widely used to explain many phenomena. The standard Fractional Brownian Motion $W_t^H$ with fractal parameter $H\in(0,1)$ is a Gaussian process with continuous paths, where mean zero and correlation function is the following form
$$E[W_s^HW_t^H]=\frac{1}{2}(|t|^{2H}+|s|^{2H}-|t-s|^{2H}).$$
Particularly, FBM is not semimartingale nor Markovian process, but it can be simplified to a standard Brownian motion when $H=\frac{1}{2}$. Since numerous results on FBM have appeared \cite{Biagini,Mishura}, it is very meaningful to study stochastic differential equations with mixed effects driven by Fractional Brownian Motion.

In this paper, we study a kind of stochastic differential equations with random effects driven by Fractional Brownian Motion, where the drift term has random effects. Specifically, we discuss $N$ independent stochastic process $(X_i(t),t\geq{0}),~i=1,\cdots,N$ with real entries, the dynamics of which are determined by
\begin{equation}\label{eq:1.1}
dX_i(t)={b(X_i(t),\phi_i)dt}+{\sigma(t)dW_i^H(t)},~~X_i(0)=x^i,i=1,\cdots,N,
\end{equation}
where $W_1^H,\cdots,W_N^H$ are $N$ mutually independent Fractional Brownian Motion with $H\in(\frac{1}{2},1)$ and the random variables $\phi_1,\cdots, \phi_N$ are independent identically distributed. The $(W_1^H,\cdots,W_N^H)$ and $(\phi_1,\cdots, \phi_N)$ are independent and $x^1,\cdots,x^N$ are initial values. The diffusion term $\sigma(t)$ is a positive deterministic function. We suppose that the distribution of $\phi_1,\cdots, \phi_N$ are the same and it can be expressed as $g(\psi,\theta)d\nu(\psi)$ on $\mathbb{R}^d$, where $g(\psi,\theta)$ is a density and $\theta$ is a parameter. Assume that $\theta_0$ is the true value. Then we want to estimate the parameters $\theta$ from the observations $\{X_i(t),i=1,\cdots,N\}$ which are observed on $[0,T_i]$. To ensure the models \eqref{eq:1.1} are defined well, some hypotheses are introduced. We then study one-dimensional linear case, which is $b(x,\phi_i)=\phi_ib(x)$, where $\phi_i$ satisfies Gaussian distribution. In this paper, we have obtained an explicit expression for likelihood with parameter $\theta$ about this case and we will give a specific explanation concerning the sufficient statistics in section 3:
\begin{equation*}
\begin{split}
U_i=\int_0^T{\Bigg(\frac{d}{d\omega_t^H}\int_0^t{k_H(t,s)\frac{b(X_i(s))}{\sigma(s)}ds}\Bigg)}dZ_i(s), &\\
V_i=\int_0^T{\Bigg(\frac{d}{d\omega_t^H}\int_0^t{k_H(t,s)\frac{b(X_i(s))}{\sigma(s)}ds}\Bigg)^2}d\omega_s^H.&
\end{split}
\end{equation*}
All of the above are derived for continuous time, but in fact, we need to deal with discrete observations on $[0,T_i]$ because the data of continuous observations is hard to obtain. So, we discretize the random variables $U_i,V_i$ and in our simulations, we give a detailed explanation. Moreover, we show several specific examples with the stochastic processes of discrete observations.

The paper is organized as follows. In section $2$, we introduce some preliminary knowledge and the hypotheses of the model. In section $3$, we present the expression of the likelihood function of the stochastic differential equation model driven by Fractional Brownian Motion and derive the maximum likelihood estimation of Gaussian one-dimensional random effect. In section $4$, we present the impact of discretization on the estimators and a simulation study. In section $5$, we give some conclusions and discussions.

\section{Preliminary}

\subsection{Girsanov-type formula}
We know that Fractional Brownian Motion is not a semimartingale, and it can't be handled as a markov process, but fortunately it is Gaussian process with continuous paths. In particular, the standard FBM $W_t^H$ becomes a standard Brownian motion when $H=\frac{1}{2}$, so we wonder if we can somehow make it better basing on that Fractional Brownian Motion has zero quadratic variation at $H>\frac{1}{2}$. Norros et al \cite{Norros} demonstrated that we can use an integral transformation to convert Fractional Brownian Motion at this time into a martingale. Next, we will introduce the basic martingale and stochastic differential equation driven by Fractional Brownian Motion.

Let $(\Omega,\mathcal{F},P)$ be a measurable space, including the natural filtration $\{\mathcal{F}_t\}$ which meets the usual conditions and can be treated as the $P$-complete filtration. Next we think about under showed the stochastic integral equation
\begin{equation}\label{eq:2.2}
X_t=\int_0^t{b(X_s)ds}+\int_0^t{\sigma(s)dW_s^H},~~t\geq{0}.
\end{equation}
where $b(X_t)$ is $\mathcal{F}_t$-adapted and the diffusion term $\sigma(t)$ is a nonzero function.
Noting that the following integral
\begin{equation}\label{eq:2.3}
\int_0^t{\sigma(s)dW_s^H}
\end{equation}
is not a It$\rm \hat{o}$ stochastic integral, but only the natural integral which is a deterministic function about Fractional Brownian Motion \cite{Norros,Gripenberg}. Although $W_t^H$ is not a semimartingale, we can transformed it into martingale and then the natural filtration of FBM $W^H$ and the martingale $M^H$ are same. For $0<s<t$, we define a $\textit{fundamental Gaussian martingale}$ $M^H$ \cite{Norros}
\begin{equation}\label{eq:2.4}
M_t^H=\int_0^t{k_H(t,s)dW_s^H},~~t\geq{0},
\end{equation}
where
\begin{equation}\label{eq:2.5}
\begin{split}
k_H(t,s)=k_H^{-1}s^{\frac{1}{2}-H}(t-s)^{\frac{1}{2}-H},~~~
k_H=2H~ \Gamma(\frac{3}{2}-H) ~\Gamma(H+\frac{1}{2}).
\end{split}
\end{equation}
And the quadratic variation of $M^H$ is equal to
\begin{equation}\label{eq:2.6}
\begin{split}
\omega_t^H=\lambda_H^{-1}t^{2-2H},
\end{split}
\end{equation}
where $\lambda_H={2H~ \Gamma(3-2H) ~\Gamma(H+\frac{1}{2})}/{\Gamma(\frac{3}{2}-H)}.$

Then, we can use the stochastic integral concerning the process $M^H$ to express \eqref{eq:2.3} as below \cite{Kleptsyna}
$$\int_0^t{K_H^\sigma(t,s)dM_s^H},~~~ 0\leq{s}\leq{t},$$
where $K_H^\sigma(t,s)=-2H\frac{d}{ds}\int_s^t{\sigma(r)r^{H-\frac{1}{2}}(r-s)^{H-\frac{1}{2}}dr}$, while the derivative which is absolute continuity about the Lebesgue measure makes sense \cite{Samko}. Similarly, for the stochastic integral equation \eqref{eq:2.2}, we can define the following formula well when $b(X_t)/\sigma(t)$ is smooth \cite{Samko}
\begin{equation}\label{eq:2.9}
Q_H(t)=\frac{d}{d\omega_t^H}\int_0^t{k_H(t,s)\frac{b(X(s))}{\sigma(s)}ds},t\in[0,T],
\end{equation}
where the derivative of the formula is comprehended in terms of absolute continuity .

Then we build a $\textit{fundamental semimartingale}$ $Z$ concerning the process $X$ by the same method as $W_t^H$ and they have the same the natural filtration. We have \cite{Kle}
\begin{equation}\label{eq:2.10}
\begin{aligned}
Z_t&=\int_0^t{k_H(t,s)[\sigma(s)]^{-1}}dX_s\\
&=\int_0^t{Q_H(s)d\omega_s^H}+M_t^H, ~~~~~~t\in[0,T],
\end{aligned}
\end{equation}
Therefore, the following Girsanov-type formula about stochastic differential equation driven by Fractional Brownian Motion can be obtained \cite{Kleptsyna,Prakasa}
\begin{thm}\cite{Kleptsyna,Prakasa}
Assume the process $Q_H$ has sample paths belonging $P$-a.s. to $L^2([0,T],d\omega^H)$. We define
\begin{equation}\label{eq:2.13}
\Lambda_H(T)=\mathrm{exp}\Bigg(-\int_0^T{Q_H(t)dM_t^H}-\frac{1}{2}\int_0^t{Q_H^2(t)d\omega_t^H}\Bigg).
\end{equation}
Then if the $E(\Lambda_H(T))=1$ holds, we can show that $P^*=\Lambda_H(T)P$ is a probability measure.
\end{thm}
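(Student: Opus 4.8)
The plan is to recognize $\Lambda_H(T)$ as the Dol\'eans--Dade (stochastic) exponential of the continuous martingale $N_t := -\int_0^t Q_H(s)\,dM_s^H$ and then to invoke the classical Girsanov machinery for continuous Gaussian martingales, which applies here because the fundamental martingale $M^H$ defined in \eqref{eq:2.4} is a continuous Gaussian martingale whose quadratic variation is the deterministic, strictly increasing function $\langle M^H\rangle_t=\omega_t^H$ of \eqref{eq:2.6}. First I would check that the hypothesis $Q_H\in L^2([0,T],d\omega^H)$ $P$-a.s. makes the stochastic integral $N_t$ well defined as a continuous local martingale and gives $\langle N\rangle_t=\int_0^t Q_H^2(s)\,d\omega_s^H<\infty$ a.s.

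Next I would apply It\^o's formula to the exponential $\Lambda_H(t)=\exp\!\big(N_t-\tfrac12\langle N\rangle_t\big)$. Since the first-order and quadratic-variation contributions cancel, this yields the linear equation $d\Lambda_H(t)=-\Lambda_H(t)\,Q_H(t)\,dM_t^H$ with $\Lambda_H(0)=1$, so $\Lambda_H$ is exactly the exponential $\mathcal{E}(N)$. In particular $\Lambda_H$ is a nonnegative continuous local martingale; being nonnegative it is a supermartingale, whence $E[\Lambda_H(T)]\le \Lambda_H(0)=1$ holds automatically.

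The decisive step, and the main obstacle, is upgrading this supermartingale to a genuine martingale on $[0,T]$. A nonnegative local martingale is in general only a strict supermartingale, and $\Lambda_H(T)$ fails to integrate to one precisely when mass is lost. Here the hypothesis $E[\Lambda_H(T)]=1$ restores the full mass: a nonnegative supermartingale whose terminal expectation equals its initial value has constant expectation on $[0,T]$, hence is a uniformly integrable true martingale. I would remark that in concrete situations this condition is guaranteed by a Novikov-type criterion, for instance $E\big[\exp(\tfrac12\int_0^T Q_H^2(s)\,d\omega_s^H)\big]<\infty$ stated for the Gaussian martingale $M^H$.

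Finally I would verify the measure axioms for $P^*(A)=\int_A \Lambda_H(T)\,dP=E[\Lambda_H(T)\mathbf{1}_A]$. Nonnegativity is immediate from $\Lambda_H(T)\ge 0$; countable additivity follows from the monotone convergence theorem applied to the defining expectation; and the total mass is $P^*(\Omega)=E[\Lambda_H(T)]=1$ by assumption. Thus $P^*$ is a probability measure, absolutely continuous with respect to $P$ with Radon--Nikodym density $\Lambda_H(T)$, which completes the argument.
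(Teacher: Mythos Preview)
Your proposal is correct and follows the standard Girsanov argument via the Dol\'eans--Dade exponential of the continuous local martingale $N_t=-\int_0^t Q_H(s)\,dM_s^H$; the paper itself does not supply a proof of this theorem but simply cites it as a known result from \cite{Kleptsyna,Prakasa}. Your write-up is thus more detailed than anything in the paper, and the route you take (stochastic exponential, supermartingale property, use of $E[\Lambda_H(T)]=1$ to recover the martingale property, then direct verification of the probability-measure axioms) is exactly the classical one underlying the cited references.
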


\subsection{Model and assumptions}
We discuss $N$ independent stochastic processes $(X_i(t), t\geq{0}), i=1,\cdots,N$ with real entries, the dynamics of which are determined by
\begin{equation} \label{eq:2.15}
\begin{aligned}
dX_i(t)=b(X_i(t),\phi_i)dt+\sigma(t)dW_i^H(t),~~~~~~X_i(0)=x^i, i=1,\cdots,N,
\end{aligned}
\end{equation}
where $W_1^H,\cdots,W_N^H$ are $N$ mutually independent Fractional Brownian Motion with $H\in(\frac{1}{2},1)$, and $\phi_1,\cdots,\phi_N$ are random variables on $(\Omega,\mathcal{F},\mathbb{P})$. Specially, the diffusion coefficient $\sigma(t)$ is a positive non-random function. The natural filtration $(\mathcal{F}_t,t\geq{0})$ admitting the usual conditions is expressed by $\mathcal{F}_t=\sigma(\phi_i,W_i^H(s),s\leq{t},i=1,\cdots,N)=\sigma(W_i^H(s),s\leq{t})\bigvee{\mathcal{F}_t^i}$,
with $W_i^H$ and $\mathcal{F}_t^i=\sigma(\phi_i,\phi_j, W_j^H(s), s\leq{t},j\neq{i})$ are independent. In addition, the $\phi_i$ are measurable by $\mathcal{F}_0$. We will then introduce assumption ensuring that the processes \eqref{eq:2.15} are well defined.

$(\mathrm{A1})$ Let $b(x,\psi):\mathbb{R}\times{\mathbb{R}^d}\rightarrow{\mathbb{R}}$ be $C^1$ satisfying
$$\exists{K}>0,\forall(x,\psi)\in{\mathbb{R}\times\mathbb{R}^d}, b^2(x,\psi)\leq{K(1+x^2+\psi^2)}.$$
For all $\psi$, then the following equation
\begin{equation} \label{eq:2.16}
dX_i^\psi(t)={b(X_i^\psi(s),\psi)}dt+\sigma(t)dW_i^H(t),~~~~~~X_i^\psi(0)=x^i,
\end{equation}
has a unique solution $(X_i^\psi(t),t\geq{0})$ and the solution is adapted to the filtration $(\mathcal{F}_t,t\geq{0})$ \cite{Mishura}. The distribution of $X_i^\psi(\cdot)$ is defined on $C(\mathbb{R}^+,\mathbb{R})$, which is the continuous function space on $\mathbb{R}^+$. Furthermore, for all $\psi$, $t\geq{0}$ and integer $k$,
\begin{equation} \label{eq:2.17}
\sup_{s\leq{t}}{\mathbb{E}[X_i^\psi(s)]^{2k}}<+\infty.
\end{equation}

\section{Parameter Estimation of Fractional Brownian Motion with Random Effects}

\subsection{Likelihood function with random effects}
In this section, the likelihood function with random effect based on the standard model are derived. Suppose that the function $(x(t),t\in[0,T_i])$ is defined on the space $C_{T_i}$ which has the $\sigma$-algebra $\mathcal{C}_{T_i}$. On the space $(C_{T_i},\mathcal{C}_{T_i})$, let $G_\psi^{x^i,T_i}$ denote the distribution of process $(X_i^\psi(t),t\in[0,T_i])$ which is given by \eqref{eq:2.16} under the condition (A1). Then, define the joint distribution of $(\phi_i,X_i(\cdot))$ as $P_\theta^i=g(\psi,\theta)d\nu(\psi)\otimes{G_\psi^{x^i,T_i}}$ on $\mathbb{R}^d\times{C_{T_i}}$ and on $(C_{T_i},\mathcal{C}_{T_i})$, we define the marginal distribution of $(X_i(t),t\in[0,T_i])$ as $G_\theta^i$. So we obtain a standard process $(\phi_i,X_i(\cdot))$ on $\mathbb{R}^d\times{C_{T_i}}$. If there is no special explanation, the following text is about this process. Finally, we will give the following hypotheses in order to construct the likelihood function better.

$(\mathrm{A2})$ For every $\psi$,$\psi^{'}$
$$G_\psi^{x^i,T_i}\Bigg(\int_0^{T_i}\Bigg({{\frac{d}{d\omega_t^H}{\int_0^t{k_H(t,s){\frac{b(X_i^\psi(s),\psi^{'})}{\sigma(s)}}}ds}}\Bigg)^2}d\omega_t^H<{+\infty}\Bigg)=1,~~~i=1,\cdots,N.$$

$(\mathrm{A3})$ For $f=\frac{\partial{b}}{\partial{\psi_j}}, j=1,\cdots,n$, there exist a positive constant $c$ and a nonzero constant $\gamma$ satisfying
$$\sup_{\psi\in{\mathbb{R}^d}}{|f(x,\psi)|}\leq{c(1+|x|^\gamma)}.$$

\begin{thm}\label{thm:3.1}
Suppose that the hypotheses $(A1)-(A3)$ hold and let $\psi_0\in{\mathbb{R}^d}$.
\begin{itemize}
\item Under the sense of absolutely continuous, the distributions $G_\psi^{x^i,T_i}$ and $G^i:=G_{\psi_0}^{x^i,T_i}$ have the following relationship
$$\frac{dG_\psi^{x^i,T_i}}{dG^i}(X_i)=L_{T_i}(X,\psi)=e^{\textit{l}_{T_i}(X_i,\psi)},$$
where $$\textit{l}_{T_i}(X_i,\psi)=\int_0^{T_i}({Q_{H,\psi}-{Q_{H,\psi_0}}})dZ_i(s)-\int_0^{T_i}\frac{1}{2}({Q_{H,\psi}^2-Q_{H,\psi_0}^2}){d\omega_s^H},$$
and $(X_i=X_i(s),s\leq{T_i})$ indicates the standard process which is given by $(X_i(s)(x)=x(s),s\leq{T_i})$ on $C_{T_i}$.
\item The function $\psi\rightarrow{L_{T_i}(X_i,\psi)}$ satisfies a continuous version $G^i$-a.s. and $(X_i,\psi)\rightarrow{L_{T_i}(X_i,\psi)}$ is measurable on $(C_{T_i}\times{\mathbb{R}^d},\mathcal{C}_{T_i}\otimes{\mathcal{B}(\mathbb{R}^d)})$.
\end{itemize}
\end{thm}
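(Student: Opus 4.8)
The plan is to build the density $dG_\psi^{x^i,T_i}/dG^i$ out of the Girsanov-type formula of Theorem~2.1 by passing through a common reference measure. Let $G_0^i$ denote the law on $(C_{T_i},\mathcal{C}_{T_i})$ of the driftless equation $dX_i=\sigma(t)\,dW_i^H(t)$, under which the fundamental semimartingale $Z_i$ of \eqref{eq:2.10} coincides with the fundamental Gaussian martingale $M^H$ of \eqref{eq:2.4}, whose bracket is the deterministic $\omega^H$ of \eqref{eq:2.6}. Assumption (A2) says that for every pair of parameters the corresponding process $Q_{H,\cdot}$ of \eqref{eq:2.9} has sample paths in $L^2([0,T_i],d\omega^H)$ almost surely; this is precisely the integrability hypothesis of Theorem~2.1, so the theorem applies to both drifts once the normalization $\mathbb{E}(\Lambda_H(T_i))=1$ is checked, which I would derive from the linear-growth bound in (A1).

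First I would use Theorem~2.1 in the form that introduces the drift, that is, take the reciprocal of $\Lambda_H(T_i)$ and substitute $M^H=Z_i-\int_0^\cdot Q_{H,\psi}\,d\omega^H$ (valid under $G_\psi^{x^i,T_i}$), to obtain
$$\frac{dG_\psi^{x^i,T_i}}{dG_0^i}=\exp\!\left(\int_0^{T_i}Q_{H,\psi}\,dZ_i(s)-\frac12\int_0^{T_i}Q_{H,\psi}^2\,d\omega_s^H\right),$$
and the identical formula with $\psi_0$ in place of $\psi$. Forming the quotient $dG_\psi^{x^i,T_i}/dG^i=(dG_\psi^{x^i,T_i}/dG_0^i)(dG_{\psi_0}^{x^i,T_i}/dG_0^i)^{-1}$, the stochastic integrals subtract to $\int_0^{T_i}(Q_{H,\psi}-Q_{H,\psi_0})\,dZ_i(s)$ and the bracket integrals to $-\tfrac12\int_0^{T_i}(Q_{H,\psi}^2-Q_{H,\psi_0}^2)\,d\omega_s^H$, which is exactly $l_{T_i}(X_i,\psi)$. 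This yields $dG_\psi^{x^i,T_i}/dG^i=e^{l_{T_i}(X_i,\psi)}$ and settles the first bullet; the only delicate bookkeeping is the sign in the reciprocal density and the substitution of $M^H$ by $Z_i$.

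For the second bullet I would treat the two integrals in $l_{T_i}$ separately. Because $b(x,\cdot)$ is $C^1$ and $\partial_{\psi_j}b$ has at most polynomial growth by (A3), while $X_i^\psi$ possesses finite moments of all orders by \eqref{eq:2.17}, one shows that $\psi\mapsto Q_{H,\psi}$ is continuous as a map into $L^2([0,T_i],d\omega^H)$, $G^i$-a.s.; this gives continuity of $\psi\mapsto\int_0^{T_i}Q_{H,\psi}^2\,d\omega_s^H$ at once. The stochastic integral $\psi\mapsto\int_0^{T_i}Q_{H,\psi}\,dZ_i(s)$ is the delicate term, since it is defined only up to a null set for each fixed $\psi$ and a jointly continuous version is required. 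I would produce it via the Kolmogorov continuity criterion: applying Burkholder--Davis--Gundy to the martingale part of $Z_i$ and Cauchy--Schwarz to its drift part, both controlled by $\mathbb{E}\int_0^{T_i}(Q_{H,\psi}-Q_{H,\psi'})^2\,d\omega_s^H$, and invoking (A3) together with \eqref{eq:2.17}, one obtains an estimate $\mathbb{E}\bigl|\int_0^{T_i}(Q_{H,\psi}-Q_{H,\psi'})\,dZ_i\bigr|^{2p}\le C\,|\psi-\psi'|^{2p}$ for $p$ large, whence a version continuous in $\psi$ exists $G^i$-a.s.

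Joint measurability of $(X_i,\psi)\mapsto L_{T_i}(X_i,\psi)$ then follows by writing the stochastic integral as a limit in probability of Riemann sums, which are jointly measurable in $(X_i,\psi)$, and combining this with the $\psi$-continuous version just built. I expect the genuine obstacle to be this continuous version: one must quantify how the singular kernel transformation in \eqref{eq:2.9} converts the $C^1$-dependence of $b$ on $\psi$ into a H\"older-type bound on $Q_{H,\psi}-Q_{H,\psi'}$ in $L^2(d\omega^H)$, strong enough to feed Kolmogorov's criterion. By contrast the Girsanov step and the algebraic identification of $l_{T_i}$ are essentially mechanical once (A2) secures the integrability hypothesis of Theorem~2.1.
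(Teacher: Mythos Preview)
Your proposal is correct and follows essentially the same route as the paper. The paper dispatches the first bullet by citation (Liptser--Shiryaev) while you spell out the two-step Girsanov argument through the driftless reference; for the second bullet both you and the paper split $\int_0^{T_i}Q_{H,\psi}\,dZ_i$ into its finite-variation part and its martingale part (the paper writes this as $I_1+I_2$ with $I_2=\int_0^{T_i}Q_{H,\psi}(dZ_i-Q_{H,\psi_0}\,d\omega^H)$), apply BDG plus (A3) and the moment bound \eqref{eq:2.17} to get a $|\psi-\psi'|^{2k}$ estimate, and invoke Kolmogorov with $2k\ge d$. The only minor divergence is joint measurability: the paper argues it directly from $\psi$-continuity together with $X_i$-measurability for each fixed $\psi$ (Carath\'eodory), whereas you go through Riemann-sum approximations; either is fine.
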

\begin{proof}
The first part is obviously true under the assumption of $(A1)-(A2)$ \cite{Lipster}.\\
The second part, we will prove that $L_{T_i}(X,\psi)$ is continuous about $\psi$. First of all, we have the following integral with given $X_i$,
\begin{equation}\label{eq:3.1}
\psi\rightarrow{\int_0^{T_i}{Q_{H,\psi}^2(s)d\omega_s^H}}.
\end{equation}
It is easy to obtain the continuity of \eqref{eq:3.1} by the continuity theorem of integrals and the assumptions.
Then, for the other part of $L_{T_i}(X,\psi)$
\begin{equation}\label{eq:3.2}
\psi\rightarrow{\int_0^{T_i}{Q_{H,\psi}(s)dZ_i(s)}},
\end{equation}
it can be divided into two parts $I_1(\psi)+I_2(\psi)$, where
$$I_1(\psi)={\int_0^{T_i}{Q_{H,\psi}(s)}{Q_{H,\psi_0}(s)}{d\omega_s^H}},$$ $$I_2(\psi)={\int_0^{T_i}{{Q_{H,\psi}(s)}\{dZ_i(s)-{Q_{H,\psi_0}(s)}{d\omega_s^H}}}\}.$$
There are some results as follows.

First, the $I_1(\psi)$ is similar to \eqref{eq:3.1}, so it's continuous. Second, using the Burkholder-Davis-Gundy inequality for $I_2(\psi)$, we have
\begin{align*}
\mathbb{E}_{G^i}(I_2(\psi)-I_2(\psi'))^{2k}&\leq{C_k\mathbb{E}_{G^i}}\Bigg[\int_0^{T_i}\Bigg(\frac{d}{d\omega_t^H}\int_0^t{k_H(t,s)\frac{(b(X_i(s),\psi)-b(X_i(s),\psi'))^2}{\sigma(s)}ds}\Bigg){d\omega_s^H}\Bigg]^k\\
&\leq{C_k}|\psi-\psi'|^{2k}{\mathbb{E}_{G^i}}\Bigg[\int_0^{T_i}\Bigg(\frac{d}{d\omega_t^H}\int_0^t{c^2K|(1+X_i(s)|^{2\gamma})}K_{\sigma_0^2}^2\Bigg){d\omega_s^H}\Bigg]^k\\
&\leq{C(k,\gamma)}|\psi-\psi'|^{2k}\Bigg[\int_0^{T_i}\Bigg(\frac{d}{d\omega_t^H}\int_0^t{(1+\mathbb{E}_{G^i}(|X_i(s)|^{2\gamma}))}\Bigg){d\omega_s^H}\Bigg]^k.
\end{align*}
By \eqref{eq:2.17} and choosing $2k\geq{d}$, the ${I_2(\psi)}$ is continuous $G^i$-a.s. under the Kolmogorov criterion.

Now, we have proved that ${L_{T_i}(X_i,\psi)}$ is continuous about $\psi$ with given $X_i$. Furthermore, we know that ${L_{T_i}(X_i,\psi)}$ is measurable about $X_i$ with every $\psi$. Obviously, we can proved that $(X_i,\psi)\rightarrow{L_{T_i}(X_i,\psi)}$ is measurable.     $\hfill\square$
\end{proof}

Without loss of generality, assuming that the $\psi_0$ in Theorem\eqref{thm:3.1} satisfies $b(x,\psi_0)\equiv{0}$. Then, the measure $G^i=G_{\psi_0}^{x^i,T_i}$ is the distribution of \eqref{eq:2.16} having no drift. So the Radon Nikodym (R-N) derivative ${L_{T_i}(X_i,\psi)}$ can be simplified as
\begin{equation}\label{eq:3.3}
{L_{T_i}(X_i,\psi)}=\mathrm{exp}\Bigg(\int_0^{T_i}{Q_H(s)dZ_i(s)}-\frac{1}{2}\int_0^{T_i}{Q_H^2(s)d\omega_s^H}\Bigg),
\end{equation}
where $Q_H(t)=\frac{d}{d\omega_t^H}\int_0^t{k_H(t,s)\frac{b(X_i(s),\psi)}{\sigma(s)}ds}$.

On the other hand, we define the distribution of $(\phi_i,X_i(\cdot)),i=1,\cdots,N$ as $P_\theta=\otimes_{i=1}^{N}{P_\theta^i}$ on the space $\prod_{i=1}^N{\mathbb{R}^d\times{C_{T_i}}}$ and on $C=\prod_{i=1}^N{C_{T_i}}$, we define the distribution of process $(X_i(t),t\in[0,T_i],i=1,\cdots,N)$ as $G_\theta=\otimes_{i=1}^{N}{G_\theta^i}$ through independence of the individuals. In the end, the exact likelihood function is obtained by calculating the density of $G_\theta$ with respect to $G=\otimes_{i=1}^{N}{G^i}$.

\begin{thm}\label{pr:3.2}
While the hypotheses $(A1)-(A3)$ hold.
\begin{itemize}
  \item The probability measure $G_\theta^i$ satisfies the following density with respect to $G^i$
  $$\frac{dG_\theta^i}{dG^i}(X_i)=\int_{\mathbb{R}^d}{L_{T_i}(X_i,\psi)g(\psi,\theta)d\nu(\psi)}:=\lambda_i(X_i,\theta).$$
  \item The distribution $G_\theta$ and $G$ on $C=\prod_{i=1}^N{C_{T_i}}$ satisfies
  $$\frac{dG_\theta}{dG}(X_1,\cdots,X_N)=\prod_{i=1}^{N}{\lambda_i(X_i,\theta)}.$$
  \item The process $(X_i(t),t\in[0,T_i],i=1,\cdots,N)$ has the exact likelihood below
  \begin{equation}\label{eq:3.4}
  \Lambda_N(\theta)=\prod_{i=1}^{N}{\lambda_i(X_i,\theta)},
  \end{equation}
\end{itemize}
\end{thm}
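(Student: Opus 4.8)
The plan is to prove the three assertions in order; the first is the only substantive one, the other two following from independence and the definition of the likelihood. For the density of $G_\theta^i$, I would start from the fact that, by construction, $G_\theta^i$ is the marginal law on $(C_{T_i},\mathcal{C}_{T_i})$ of the joint distribution $P_\theta^i=g(\psi,\theta)\,d\nu(\psi)\otimes G_\psi^{x^i,T_i}$. Thus for any bounded measurable functional $F$ on $C_{T_i}$,
$$\int_{C_{T_i}} F(X_i)\,dG_\theta^i(X_i)=\int_{\mathbb{R}^d}\left(\int_{C_{T_i}} F(X_i)\,dG_\psi^{x^i,T_i}(X_i)\right)g(\psi,\theta)\,d\nu(\psi).$$
Next I would substitute the Radon--Nikodym relation $dG_\psi^{x^i,T_i}=L_{T_i}(X_i,\psi)\,dG^i$ furnished by Theorem \ref{thm:3.1}, turning the right-hand side into a double integral of $F(X_i)\,L_{T_i}(X_i,\psi)\,g(\psi,\theta)$ against $dG^i\otimes d\nu$. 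Interchanging the order of integration and recognising the inner integral over $\mathbb{R}^d$ as $\lambda_i(X_i,\theta)$ leaves $\int_{C_{T_i}} F(X_i)\,\lambda_i(X_i,\theta)\,dG^i(X_i)$, which identifies $dG_\theta^i/dG^i=\lambda_i(\cdot,\theta)$ since $F$ was arbitrary.

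The main obstacle is the legitimacy of this interchange. Fubini's theorem requires that $(X_i,\psi)\mapsto L_{T_i}(X_i,\psi)$ be jointly measurable on $(C_{T_i}\times\mathbb{R}^d,\mathcal{C}_{T_i}\otimes\mathcal{B}(\mathbb{R}^d))$ --- which is exactly the second bullet of Theorem \ref{thm:3.1} --- together with an integrability estimate. The latter is easily obtained by Tonelli's theorem applied to the nonnegative integrand $|F|\,L_{T_i}\,g$: since $L_{T_i}(\cdot,\psi)$ is itself a Radon--Nikodym derivative one has $\int_{C_{T_i}} L_{T_i}(X_i,\psi)\,dG^i=1$ for each $\psi$, and $g(\cdot,\theta)$ integrates to one against $\nu$, so the iterated integral is bounded by $\|F\|_\infty<\infty$. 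This simultaneously justifies the swap and confirms that $\lambda_i(\cdot,\theta)$ is a bona fide density with $\int_{C_{T_i}}\lambda_i(X_i,\theta)\,dG^i=1$.

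For the second assertion I would invoke the product structure of the model. The $N$ pairs $(\phi_i,X_i(\cdot))$ are independent, so $G_\theta=\otimes_{i=1}^N G_\theta^i$ and $G=\otimes_{i=1}^N G^i$, and the Radon--Nikodym derivative of one product measure with respect to another factorises into the product of the coordinatewise derivatives; applying the first assertion coordinate by coordinate then yields $dG_\theta/dG=\prod_{i=1}^N\lambda_i(X_i,\theta)$. The third assertion is immediate, since by definition the exact likelihood $\Lambda_N(\theta)$ is the density of the observed law $G_\theta$ with respect to the reference law $G$, whence $\Lambda_N(\theta)=\prod_{i=1}^N\lambda_i(X_i,\theta)$.
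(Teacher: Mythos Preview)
Your proposal is correct and follows essentially the same route as the paper: expressing $E_{G_\theta^i}(F(X_i))$ as an integral over $\psi$ of $E_{G_\psi^{x^i,T_i}}(F(X_i))$, inserting the Radon--Nikodym derivative $L_{T_i}(X_i,\psi)$ from Theorem~\ref{thm:3.1}, and using joint measurability plus Fubini to swap the $\psi$- and $X_i$-integrals. Your justification of the interchange via Tonelli and the fact that $\int L_{T_i}(\cdot,\psi)\,dG^i=1$ is in fact more carefully stated than the paper's version, and the paper does not explicitly argue the product-measure step for the second and third bullets at all, so your treatment there is also fine.
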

\begin{proof}
For a positive measurable function $F$ on the space $C_{T_i}$, we obtain
$$E_{G_\theta^i}(F(X_i))=E_{P_\theta^i}(F(X_i))=E_{P_\theta^i}[E_{P_\theta^i}(F(X_i)|\phi_i)].$$
Using theorem \eqref{thm:3.1} and $L_{T_i}(X_i,\psi)$, we then have
$$E_{P_\theta^i}[(F(X_i)|\phi_i=\psi)]=E_{G_\psi^{x_i,T_i}}(F(X_i))=E_{G^i}(F(X_i)L_{T_i}(X_i,\psi)).$$
Moreover, we can establish the following equality by the joint measurability of $L_{T_i}(X_i,\psi)$ with two variables and the Fubini theorem,
\begin{align*}
E_{G_\theta^i}(F(X_i))&=\int_{\mathbb{R}^d}{g(\psi,\theta)dv(\psi)\mathbb{E}_{G^i}(H(X_i)L_{T_i}(X_i,\psi))}\\
&=\mathbb{E}_{G^i}(H(X_i))\int_{\mathbb{R}^d}{g(\psi,\theta)dv(\psi)(L_{T_i}(X_i,\psi))}.
\end{align*}
Therefore, we can calculate the $\text{R-N}$ derivative
$$\frac{dG_\theta^i}{dG^i}(X_i)=\int_{\mathbb{R}^d}{L_{T_i}(X_i,\psi)g(\psi,\theta)d\nu(\psi)}:=\lambda_i(X_i,\theta).$$
This is the exact likelihood formula. $\hfill\square$
\end{proof}

\subsection{Parameter estimation}
For the sake of simplicity, we now think about the special form of \eqref{eq:2.15} with $b(x,\psi)=\psi{b(x)}$, where $\psi\in{\mathbb{R}}$ and  $b(\cdot)$ and $\sigma(\cdot)$ are known. At the same time, we reduce the hypothesis (A1)-(A2) and suppose that $b,\sigma$ admits linear growth. For every $\psi$, we suppose that $\int_0^{T_i}{Q_H^2(s)d\omega_s^H}<\infty$, $G_{\psi}^{x^i,T_i}$-a.s. In order to make the observed processes $(X_i(t),i=1,\cdots,N)$ on $[0,T_i]$ independently and identically distribution, we set $T_i=T$, $x^i=x$ while $i=1,\cdots,N$. Difine
\begin{equation}\label{eq:3.5}
\begin{split}
U_i=\int_0^T{\Bigg(\frac{d}{d\omega_t^H}\int_0^t{k_H(t,s)\frac{b(X_i(s))}{\sigma(s)}ds}\Bigg)}dZ_i(s),\\
V_i=\int_0^T{\Bigg(\frac{d}{d\omega_t^H}\int_0^t{k_H(t,s)\frac{b(X_i(s))}{\sigma(s)}ds}\Bigg)^2}d\omega_s^H.
\end{split}
\end{equation}
We obtain the following result
\begin{equation}\label{eq:3.6}
\lambda_i(X_i,\theta)=\int_{\mathbb{R}^d}{g(\psi,\theta)\mathrm{exp}\Bigg(\psi{U_i}-\frac{\psi^2}{2}V_i\Bigg)d\nu(\psi)}.
\end{equation}
In this work, we suppose that the $\phi_i$ in \eqref{eq:2.15} satisfies Gaussian distribution $\mathcal{N}(\mu,\sigma_0^2)$ and the exact likelihood is given as below.

\begin{thm}\label{thm:3.3}
Suppose that $g(\psi,\theta)d\nu(\psi)=\mathcal{N}(\mu,\sigma_0^2)$. We then have
$$\lambda_i(X_i,\theta)=\frac{1}{(1+\sigma_0^2V_i)^{\frac{1}{2}}}\mathrm{exp}\Bigg[{-\frac{V_i}{2(1+\sigma_0^2V_i)}}\Bigg(\mu-\frac{U_i}{V_i}\Bigg)^2\Bigg]\mathrm{exp}\Bigg(\frac{U_i^2}{2V_i}\Bigg).$$
Under $P_\theta^i$, while $X_i$ are given, then the conditional distribution of $\phi_i$  is $$\mathcal{N}\Bigg(\frac{\mu+\sigma_0^2U_i}{1+\sigma_0^2V_i},\frac{\sigma_0^2}{1+\sigma_0^2V_i}\Bigg).$$
Hence, taking the logarithm of \eqref{eq:3.4}, we obtain
\begin{equation}\label{eq:3.7}
\mathcal{L}_N(\theta)=-\frac{1}{2}{\sum_{i=1}^N}{\mathrm{log(1+\sigma_0^2V_i)}}-\frac{1}{2}{\sum_{i=1}^N}{\frac{V_i}{1+\sigma_0^2V_i}}\Bigg(\mu-\frac{U_i}{V_i}\Bigg)^2+{\sum_{i=1}^N}\frac{U_i^2}{2V_i}.
\end{equation}
We derive the derivatives of \eqref{eq:3.7}
$$\frac{\partial}{\partial{\mu}}\mathcal{L}_N(\theta)={\sum_{i=1}^N}\Bigg({\frac{U_i}{1+\sigma_0^2V_i}}-\mu{\frac{V_i}{1+\sigma_0^2V_i}}\Bigg),$$
$$\frac{\partial}{\partial{\sigma_0^2}}\mathcal{L}_N(\theta)=\frac{1}{2}{\sum_{i=1}^N}\Bigg[\Bigg({\frac{U_i}{1+\sigma_0^2V_i}}-\mu{\frac{V_i}{1+\sigma_0^2V_i}}\Bigg)^2-{\frac{V_i}{1+\sigma_0^2V_i}}\Bigg].$$
Then we know that while $\sigma_0^2$ is given, the estimated value of $\mu_0$ is
\begin{equation}\label{eq:3.8}
\hat{\mu}_N={\sum_{i=1}^N}{\frac{U_i}{1+\sigma_0^2V_i}}\Bigg/{\sum_{i=1}^N}{\frac{V_i}{1+\sigma_0^2V_i}}.
\end{equation}
And while $\theta_0=(\mu_0,\sigma_0^2)$ are unknown, the estimated value is determined by the following system
$$\hat{\mu}_N=\Bigg({\sum_{i=1}^N}{\frac{V_i}{1+\hat{\sigma}_0^2V_i}}\Bigg)^{-1}\Bigg({\sum_{i=1}^N}{\frac{U_i}{1+\hat{\sigma}_0^2V_i}}\Bigg),$$
$${\sum_{i=1}^N}\Bigg(\hat{\mu}_N-\frac{U_i}{V_i}\Bigg)^2{\frac{V_i^2}{(1+\hat{\sigma}_0^2V_i)^2}}={\sum_{i=1}^N}{\frac{V_i}{1+\hat{\sigma}_0^2V_i}}.$$
\end{thm}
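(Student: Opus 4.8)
The plan is to evaluate the integral in \eqref{eq:3.6} explicitly by completing the square in $\psi$. Substituting the Gaussian density $g(\psi,\theta)\,d\nu(\psi)=(2\pi\sigma_0^2)^{-1/2}\exp\big(-(\psi-\mu)^2/(2\sigma_0^2)\big)\,d\psi$ into \eqref{eq:3.6}, the integrand becomes the exponential of the quadratic
\[
-\frac{(\psi-\mu)^2}{2\sigma_0^2}+\psi U_i-\frac{\psi^2}{2}V_i .
\]
First I would collect coefficients: writing $A=\sigma_0^{-2}+V_i=(1+\sigma_0^2V_i)/\sigma_0^2$ and $B=\sigma_0^{-2}\mu+U_i$, the exponent rearranges to $-\tfrac{A}{2}\big(\psi-B/A\big)^2+\tfrac{B^2}{2A}-\tfrac{\mu^2}{2\sigma_0^2}$.

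This single completed square delivers two of the three assertions simultaneously. On the one hand, under $P_\theta^i$ the conditional density of $\phi_i$ given $X_i$ is, by Bayes' rule together with Theorem~\ref{thm:3.1}, proportional to $g(\psi,\theta)L_{T_i}(X_i,\psi)=g(\psi,\theta)\exp\big(\psi U_i-\tfrac{\psi^2}{2}V_i\big)$; since this is exactly $\exp\big(-\tfrac{A}{2}(\psi-B/A)^2\big)$ up to a $\psi$-free factor, the conditional law is Gaussian with mean $B/A=(\mu+\sigma_0^2U_i)/(1+\sigma_0^2V_i)$ and variance $1/A=\sigma_0^2/(1+\sigma_0^2V_i)$, which is the second claim. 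On the other hand, the $\psi$-free factor and the Gaussian normalization $\int_{\mathbb{R}}\exp\big(-\tfrac{A}{2}(\psi-B/A)^2\big)\,d\psi=\sqrt{2\pi/A}$ give
\[
\lambda_i(X_i,\theta)=(2\pi\sigma_0^2)^{-1/2}\sqrt{2\pi/A}\,\exp\Big(\tfrac{B^2}{2A}-\tfrac{\mu^2}{2\sigma_0^2}\Big)=\frac{1}{(1+\sigma_0^2V_i)^{1/2}}\exp\Big(\tfrac{B^2}{2A}-\tfrac{\mu^2}{2\sigma_0^2}\Big).
\]

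To match the stated form of $\lambda_i$, I would rewrite the scalar exponent $\tfrac{B^2}{2A}-\tfrac{\mu^2}{2\sigma_0^2}$ over the common denominator $2\sigma_0^2(1+\sigma_0^2V_i)$; after cancelling the $\mu^2$ term its numerator reduces to $\sigma_0^2(2\mu U_i+\sigma_0^2U_i^2-\mu^2V_i)$, and one checks that $\tfrac{2\mu U_i+\sigma_0^2U_i^2-\mu^2V_i}{2(1+\sigma_0^2V_i)}$ equals $-\tfrac{V_i}{2(1+\sigma_0^2V_i)}\big(\mu-U_i/V_i\big)^2+\tfrac{U_i^2}{2V_i}$ once the $U_i^2/V_i$ contributions cancel, giving the first claim. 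For the final assertion I take the product \eqref{eq:3.4} over $i=1,\dots,N$, pass to logarithms to obtain \eqref{eq:3.7}, and then differentiate $\mathcal{L}_N$ with respect to $\mu$ and to $\sigma_0^2$ to read off the two score equations; setting the $\mu$-score to zero yields \eqref{eq:3.8}, and solving the pair jointly gives the estimating system for $(\hat\mu_N,\hat\sigma_0^2)$. The only mildly delicate point is this last algebraic rearrangement, since the square completion most naturally produces the $(\mu+\sigma_0^2U_i)^2$ grouping rather than the $(\mu-U_i/V_i)^2$ grouping in the statement; everything else is routine Gaussian integration and differentiation.
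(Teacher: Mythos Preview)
Your proposal is correct and follows essentially the same route as the paper: both complete the square in $\psi$ in the joint density $g(\psi,\theta)\exp(\psi U_i-\tfrac{\psi^2}{2}V_i)$, read off the conditional Gaussian law of $\phi_i$ given $X_i$, and then rearrange the $\psi$-free remainder into the $(\mu-U_i/V_i)^2$ form. Your $A,B$ are exactly the paper's $V_i+\sigma_0^{-2}$ and $U_i+\sigma_0^{-2}\mu$, and you are in fact more explicit than the paper about the Gaussian normalization and the final algebraic regrouping.
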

\begin{proof}
We first calculate the joint density of $(\phi_i,X_i)$
$$exp\Bigg(\psi{U_i}-\frac{\psi^2}{2}V_i\Bigg)\times\frac{1}{\sigma\sqrt{2\pi}}exp\Bigg[-\frac{1}{2\sigma_0^2}(\psi-\mu)^2\Bigg].$$
Then we obtain $$E_i=-\frac{1}{2}\Bigg[\psi^2(V_i+\sigma_0^{-2})-2\psi(U_i+\sigma_0^{-2}\mu)\Bigg]-\frac{1}{2}\sigma_0^{-2}\mu^2.$$
Set $$m_i=\frac{U_i+\sigma_0^{-2}\mu}{V_i+\sigma_0^{-2}}=\frac{\mu+\sigma_0^2U_i}{1+\sigma_0^2V_i},~~~\omega_i^2=(V_i+\sigma_0^{2})^{-1}=\frac{\sigma_0^{2}}{1+\sigma_0^{-2}V_i}.$$
Given $X_i$, the random variable $\phi_i$ satisfies the Gaussian law $\mathcal{N}(m_i,\omega_i^2)$.
And through some calculation, we have
$$E_i=-\frac{1}{2\omega_i^2}(\psi-m_i)^2-\frac{1}{2}V_i(1+\sigma_0^2V_i)^{-1}(\mu-V_i^{-1}U_i)^2+\frac{1}{2}V_i^{-1}U_i^2.$$
$\hfill\square$
\end{proof}

\begin{rem}\label{re:3.1}
In particular case, when the effect $\phi_i\equiv{\mu_0}$ corresponding to $\sigma_0^2=0$, the expression of the estimated value $\mu_0$ is
\begin{equation}\label{eq:3.9}
\tilde{\mu}_N=\sum_{i=1}^N{U_i}\Bigg/\sum_{i=1}^N{V_i}.
\end{equation}
\end{rem}

\subsection{Properties}
Now, we set up the random variable as follows
\begin{equation}\label{eq:3.10}
\gamma_i(\theta)=\frac{U_i-\mu{V_i}}{1+\sigma_0^2{V_i}},~~~I_i(\sigma_0^2)=\frac{V_i}{1+\sigma_0^2{V_i}}.
\end{equation}
In fact, the $\gamma_i(\theta)$ and $I_i(\sigma_0^2)$ are independent and identically distributed under $G_\theta$, we then get
\begin{equation}\label{eq:3.11}
\frac{\partial}{\partial{\mu}}\mathcal{L}_N(\theta)=\sum_{i=1}^N{\gamma_i(\theta)},~~~
\frac{\partial}{\partial{\sigma_0^2}}\mathcal{L}_N(\theta)=\frac{1}{2}\sum_{i=1}^N{(\gamma_i^2(\theta)-I_i(\sigma_0^2))}.
\end{equation}
Obviously, $I_i(\sigma_0^2)$ is bounded because of $0<I_i(\sigma_0^2)\leq{{1}/{\sigma_0^2}}$. Further, we can obtain
\begin{prop}\label{pr:3.1}
For every $\mu\in{\mathbb{R}}$, and every $\theta=(\mu,\sigma_0^2)\in{\mathbb{R}\times{\mathbb{R}^+}}$,
$$E_\theta\Bigg(\exp\Bigg(u\frac{U_1}{1+\sigma_0^2{V_1}}\Bigg)\Bigg)<+\infty.$$
\end{prop}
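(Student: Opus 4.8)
The plan is to avoid estimating the moment generating function of $U_1/(1+\sigma_0^2 V_1)$ directly. A naive attack through the change of measure $dG_\theta^1/dG^1=\lambda_1$ of Theorem \eqref{pr:3.2} produces, after simplification of $\lambda_1$, a term of the form $\exp\!\big(\sigma_0^2 U_1^2/(2(1+\sigma_0^2 V_1))\big)$ whose integrability is borderline. Instead I would exploit the explicit conditional law of $\phi_1$ given $X_1$ furnished by Theorem \eqref{thm:3.3}: under $P_\theta^1$ and conditionally on $X_1$, the random effect $\phi_1$ is Gaussian with mean $m_1=(\mu+\sigma_0^2 U_1)/(1+\sigma_0^2 V_1)$. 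Since $U_1$ and $V_1$ are $X_1$-measurable, $m_1$ is exactly the conditional expectation $E_\theta[\phi_1\mid X_1]$, and this identification is what makes the estimate tractable.

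First I would solve the defining relation $m_1(1+\sigma_0^2 V_1)=\mu+\sigma_0^2 U_1$ for the quantity of interest, obtaining the exact decomposition
\[
\frac{U_1}{1+\sigma_0^2 V_1}=\frac{m_1}{\sigma_0^2}-\frac{\mu}{\sigma_0^2(1+\sigma_0^2 V_1)}.
\]
Because $V_1\ge 0$, the second term is bounded in absolute value by $|\mu|/\sigma_0^2$, so exponentiating and multiplying out gives
\[
\exp\!\Big(u\frac{U_1}{1+\sigma_0^2 V_1}\Big)\le \exp\!\Big(\frac{|u\mu|}{\sigma_0^2}\Big)\,\exp\!\Big(\frac{u}{\sigma_0^2}m_1\Big),
\]
and it remains only to show $E_\theta[\exp((u/\sigma_0^2)m_1)]<+\infty$. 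Here the positivity $\sigma_0^2>0$, guaranteed by $\theta\in\mathbb{R}\times\mathbb{R}^+$, is used to keep $1/\sigma_0^2$ finite.

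The key step is to apply the conditional Jensen inequality to the convex map $y\mapsto\exp((u/\sigma_0^2)y)$ together with $m_1=E_\theta[\phi_1\mid X_1]$:
\[
\exp\!\Big(\frac{u}{\sigma_0^2}m_1\Big)=\exp\!\Big(\frac{u}{\sigma_0^2}E_\theta[\phi_1\mid X_1]\Big)\le E_\theta\!\Big[\exp\!\Big(\frac{u}{\sigma_0^2}\phi_1\Big)\,\Big|\,X_1\Big].
\]
Taking $E_\theta$ of both sides and invoking the tower property collapses the bound to the unconditional moment generating function of $\phi_1\sim\mathcal{N}(\mu,\sigma_0^2)$ evaluated at $u/\sigma_0^2$, namely $\exp(u\mu/\sigma_0^2+u^2/(2\sigma_0^2))$, which is finite for every real $u$. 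Combining this with the bounded prefactor $\exp(|u\mu|/\sigma_0^2)$ yields the claim.

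The main obstacle is conceptual rather than computational: one must recognise that the conditional mean $m_1$ is a genuine conditional expectation of the Gaussian $\phi_1$, which lets Jensen trade the awkward ratio $U_1/(1+\sigma_0^2 V_1)$ for $\phi_1$ itself and thereby sidestep the delicate, and in fact critical, exponential integrability of $U_1^2/V_1$ that the direct approach would demand. Once this observation is in place, every remaining step is elementary.
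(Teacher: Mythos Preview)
Your proof is correct, but it follows a different path from the paper's. The paper exploits the exponential-family structure in $\mu$: writing $\gamma_1=(U_1-\mu V_1)/(1+\sigma_0^2 V_1)$ and $I_1=V_1/(1+\sigma_0^2 V_1)$, one has the exact tilting identity $l(X_1,(\mu+u,\sigma_0^2))=l(X_1,\theta)+u\gamma_1-\tfrac{u^2}{2}I_1$, so that $\lambda_1(X_1,\theta)e^{u\gamma_1}=\lambda_1(X_1,\theta(u))e^{u^2 I_1/2}$. Integrating against $G^1$ turns the left side into $E_\theta e^{u\gamma_1}$ and the right side into $E_{\theta(u)}e^{u^2 I_1/2}\le e^{u^2/(2\sigma_0^2)}$, after which the residual factor $e^{u\mu I_1}$ is handled by the bound $I_1\le 1/\sigma_0^2$. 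Your argument instead reads $m_1=(\mu+\sigma_0^2 U_1)/(1+\sigma_0^2 V_1)$ as the conditional mean $E_\theta[\phi_1\mid X_1]$ and applies conditional Jensen to trade $e^{(u/\sigma_0^2)m_1}$ for the Gaussian moment generating function of $\phi_1$. The paper's route makes the underlying curved-exponential-family structure explicit and would generalise more readily to higher derivatives or to other exponential families for the random effect; your route is more elementary, needing only Jensen and the marginal law of $\phi_1$, and gives a comparably sharp bound with less algebra. Both avoid the borderline term $\exp\!\big(\sigma_0^2 U_1^2/(2(1+\sigma_0^2 V_1))\big)$ that a naive change of measure would produce.
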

\begin{proof}
Let $\gamma_1(\theta)=\gamma_1$, $I_1(\sigma_0^2)=I_1$ and $l(X_1,\theta)=\log{\lambda_1(X_1,\theta)}$. Set $\theta(u)=(\mu+u,\sigma_0^2)$. Using $(U_1-(\mu+u)V_1)^2$, we have
$$l(X_1,\theta(u))=l(X_1,\theta)+u\gamma_1-\frac{u^2}{2}I_1,$$
where $\frac{\partial}{\partial{{\mu}}}l(X_1,\theta)=\gamma_1$ and $\frac{\partial^2}{\partial{{\mu}^2}}l(X_1,\theta)=-I_1$. Then taking exponential simultaneously, we can obtain
\begin{equation}\label{eq:3.12}
\lambda_1(X_1,\theta)\exp(u\gamma_1)=\lambda_1(X_1,\theta(u))\exp\Bigg(\frac{u^2}{2}I_1\Bigg).
\end{equation}
We can integrate both sides of \eqref{eq:3.12} with respect to the measure $Q^1$ on account of $I_1\leq{1/\sigma_0^2}$,
$$E_\theta{\exp(u\gamma_1)}=E_{\theta(u)}\exp\Bigg(\frac{u^2}{2}I_1\Bigg)\leq{\exp\Bigg(\frac{u^2}{2\sigma_0^2}\Bigg)}<{+\infty}.$$
Then, because of $u\mu\leq{(u+\mu)^2/4}$, we have
$$E_\theta\Bigg(\exp\Bigg(u\frac{U_1}{1+\sigma_0^2V_1}\Bigg)\Bigg)\leq{E_\theta{\exp(u\gamma_1)\exp\Bigg(\frac{(u+\mu)^2}{4\sigma_0^2}\Bigg)}}<{+\infty}.$$ $\hfill\square$
\end{proof}

\section{Discrete Data and Numerical Simulation}
In this section, we will do some numerical simulations to the model. Since we can only get discrete data from the simulation, let us start with a simply discussion of the discrete case. Suppose that we obtain the process $X_i(t)$ by observing at times $t_k^n=t_k=k\frac{T}{n}, k=1,2,\cdots,n+1$ simultaneously. We give the calculation of $Q_H(t)$ and $Z(t)$ to establish estimators $\hat{\theta}_N^{(n)}$
\begin{align*}
Q_H(t_k)=\frac{{\int_0^{t_k}{k_H(t_k,s)\frac{b(X(s))}{\sigma(s)}}ds}-{\int_0^{t_{k-1}}{k_H(t_{k-1},s)\frac{b(X(s))}{\sigma(s)}}ds}}{\omega^H(t_{k})-\omega^H(t_{k-1})},
\end{align*}
\begin{align*}
Z(t_k)\approx\sum_{i=2}^{k-1}{k_H^{-1}{t_i}^{\frac{1}{2}-H}(t_k-t_i)^{\frac{1}{2}-H}{\sigma(t_i)}^{-1}(X_{t_{i+1}}-X_{t_i})},
\end{align*}
where $${\int_0^{t_k}{k_H(t_k,s)\frac{b(X(s))}{\sigma(s)}}ds}\approx\sum_{i=2}^{k-1}{k_H^{-1}{t_i}^{\frac{1}{2}-H}(t_k-t_i)^{\frac{1}{2}-H}{b(X(t_i))}{\sigma(t_i)}^{-1}({t_{i+1}}-{t_i})}.$$

Then the random variables $U_i,V_i,i=1,\cdots,N$ are replaced by
\begin{equation}\label{eq:4.1}
U_i^n=\sum_{k=1}^{n}{Q_H(k)(Z_i(t_{k+1})-Z_i(t_k))},
\end{equation}
\begin{equation}\label{eq:4.2}
V_i^n=\sum_{k=1}^{n}{Q_H^2(k)(\omega^H(t_{k+1})-\omega^H(t_k))}.
\end{equation}
Combining with the log-likelihood \eqref{eq:3.7}, we can get the following expression of the difference $U_i-U_i^n$ and $V_i-V_i^n.$
\begin{lem}\label{lem:1}
Suppose that $b(X_t)$, $\sigma(t)$ and $b(X_t)/\sigma(t)$ are Lipschitz and $b(X_t)/\sigma(t)$ is bounded. For every $p\geq{1}$, there exists a constant $C$, then
$$\mathbb{E}_{\theta_0}(|V_i-V_i^n|^p+|U_i-U_i^n|^p)\leq{\frac{C}{n^{\frac{p}{2}}}}, ~~~i=1,\cdots,N.$$
\end{lem}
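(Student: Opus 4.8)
The plan is to treat the two errors $|V_i-V_i^n|$ and $|U_i-U_i^n|$ separately, using the convexity bound $(a+b)^p\le 2^{p-1}(a^p+b^p)$ to reduce the statement to proving $\mathbb{E}_{\theta_0}|V_i-V_i^n|^p\le Cn^{-p/2}$ and $\mathbb{E}_{\theta_0}|U_i-U_i^n|^p\le Cn^{-p/2}$ individually. The structural device throughout is the semimartingale decomposition \eqref{eq:2.10}, namely $dZ_i(s)=Q_H(s)\,d\omega_s^H+dM^H_s$ with $\langle M^H\rangle=\omega^H$. It yields the clean identity $U_i=V_i+\int_0^T Q_H(s)\,dM^H_s$, so that the $U$-error carries one finite-variation piece (the same kind of object as the $V$-error) plus a genuine stochastic-integral piece. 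I would fix $i$ and suppress it, and write $\underline s=t_k$ for $s\in[t_k,t_{k+1})$ for the left-endpoint piecewise-constant interpolation.

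First I would dispose of the finite-variation errors, that is, $V_i-V_i^n$ and the part of $U_i-U_i^n$ coming from $\int_0^T Q_H^2\,d\omega^H$. Because $\omega_s^H=\lambda_H^{-1}s^{2-2H}$ is deterministic and smooth on $(0,T]$, these are pathwise integrals against a fixed measure, and the error further splits into (i) the left-endpoint quadrature error $\sum_k\int_{t_k}^{t_{k+1}}\big(Q_H^2(s)-Q_H^2(t_k)\big)\,d\omega_s^H$ for the exact integrand, and (ii) the error incurred by replacing the exact $Q_H(t_k)$ by the difference quotient of the discretized kernel integral used in $U_i^n,V_i^n$. For (i) I would apply Minkowski's inequality in $L^p$, bound $\|Q_H^2(s)-Q_H^2(t_k)\|_{L^p}$ by the $L^p(\mathbb{E}_{\theta_0})$ modulus of continuity of $Q_H$, and use $\omega^H(t_{k+1})-\omega^H(t_k)=O(1/n)$; for (ii) I would estimate the quadrature error of $I(t)=\int_0^t k_H(t,s)\frac{b(X(s))}{\sigma(s)}\,ds$, invoking boundedness and Lipschitz continuity of $b/\sigma$ together with the moment bounds \eqref{eq:2.17} to control the increments of $X$ uniformly. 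Summation over the $n$ subintervals then assembles the $n^{-p/2}$ bound.

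Next I would handle the stochastic-integral piece, whose leading term is $\int_0^T Q_H\,dM^H-\sum_k Q_H(t_k)\big(M^H_{t_{k+1}}-M^H_{t_k}\big)=\sum_k\int_{t_k}^{t_{k+1}}\big(Q_H(s)-Q_H(t_k)\big)\,dM^H_s$. Here the Burkholder--Davis--Gundy inequality gives
$$\mathbb{E}_{\theta_0}\Big|\int_0^T\big(Q_H(s)-Q_H(\underline s)\big)\,dM^H_s\Big|^p\le C_p\,\mathbb{E}_{\theta_0}\Big(\int_0^T\big(Q_H(s)-Q_H(\underline s)\big)^2\,d\omega_s^H\Big)^{p/2},$$
after which the $L^p$ H\"older-$\tfrac12$ regularity of $s\mapsto Q_H(s)$ combined with $\omega^H(t_{k+1})-\omega^H(t_k)=O(1/n)$ produces the rate; this is the mechanism that forces the exponent $p/2$ rather than $p$. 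The residual differences caused by replacing $Q_H(t_k)$ and $Z_i$ by their discretized versions are treated exactly as the finite-variation errors above, again via BDG for the martingale content and the quadrature estimate for the kernel integral.

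The main obstacle is the regularity and quadrature control of $Q_H$ itself. Since $Q_H(s)=\frac{d}{d\omega_s^H}\int_0^s k_H(s,r)\frac{b(X(r))}{\sigma(r)}\,dr$ is built from the $\omega^H$-derivative of an integral against the weakly singular kernel $k_H(s,r)=k_H^{-1}r^{1/2-H}(s-r)^{1/2-H}$, whose exponent $1/2-H$ is negative for $H\in(\tfrac12,1)$, both endpoints $r\to0$ and $r\to s$ are singular. Establishing (a) the $L^p$-modulus-of-continuity (H\"older-$\tfrac12$) estimate for $Q_H$ and (b) the $O(n^{-1/2})$ quadrature error for $I(t)$ uniformly up to these singularities, using only the stated Lipschitz, boundedness and moment hypotheses, is the delicate part; once these two estimates are in hand, everything else is routine assembly via BDG, H\"older's inequality and the moment bound \eqref{eq:2.17}.
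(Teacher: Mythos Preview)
The paper does not actually prove this lemma; it is stated without proof in Section~4 and immediately followed by the proposition it is meant to support. There is therefore no ``paper's own proof'' to compare your proposal against.

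As for the proposal itself: the overall architecture---splitting $U$ via the semimartingale decomposition \eqref{eq:2.10}, treating the finite-variation pieces as deterministic quadrature errors in the $d\omega^H$-integral, and handling the martingale increment via BDG---is the natural one and is what one would expect any proof to do. One small correction: under $P_{\theta_0}$ the drift of $Z_i$ carries the random effect, so $dZ_i=\phi_i Q_H\,d\omega^H+dM^H$ and hence $U_i=\phi_i V_i+\int_0^T Q_H\,dM^H$, not $U_i=V_i+\int_0^T Q_H\,dM^H$; since $\phi_i$ is Gaussian and independent of $W_i^H$ this changes nothing structurally.

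You have also correctly located the real difficulty: the rate $n^{-p/2}$ rests entirely on (a) an $L^p$ H\"older-$\tfrac12$ modulus for $s\mapsto Q_H(s)$ and (b) an $O(n^{-1/2})$ quadrature error for $I(t)=\int_0^t k_H(t,s)\frac{b(X(s))}{\sigma(s)}\,ds$, both uniformly in the endpoint singularities of $k_H$. The paper provides no argument for either, and with the weakly singular kernel $k_H(t,s)=k_H^{-1}s^{1/2-H}(t-s)^{1/2-H}$ the crude left-endpoint Riemann sums actually written down in the discretization need not deliver that rate near $s=0$ and $s=t$; a proof would have to treat those boundary cells separately. Your plan is sound as an outline, but these two regularity/quadrature estimates are where all the work lies, and the paper gives no help.
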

From Lemma \eqref{lem:1}, we derive
\begin{prop}
If $n\rightarrow{+\infty}$, then ${\hat{\theta}}_N-{\hat{\theta}}_N^{(n)}=o_{p_{\theta_0}}(1)$. If $\frac{n}{N}\rightarrow{+\infty}$ corresponding $n=n(N)\rightarrow{+\infty}$, then $\sqrt{N}{\hat{\theta}}_N-{\hat{\theta}}_N^(n)=o_{p_{\theta_0}}(1).$
\end{prop}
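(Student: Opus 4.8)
The plan is to treat $\hat\theta_N$ and $\hat\theta_N^{(n)}$ as Z-estimators, i.e.\ as roots of the score system \eqref{eq:3.11}, which differ only in that the continuous statistics $(U_i,V_i)$ of \eqref{eq:3.5} are replaced by the discretized $(U_i^n,V_i^n)$ of \eqref{eq:4.1}--\eqref{eq:4.2}. Writing the normalized score as $\Psi_N(\theta)=\frac1N\sum_{i=1}^N\psi(U_i,V_i,\theta)$ with $\psi=\big(\gamma_i(\theta),\,\tfrac12(\gamma_i^2(\theta)-I_i(\sigma_0^2))\big)$ in the notation of \eqref{eq:3.10}, and $\Psi_N^{(n)}$ for the analogue built from $(U_i^n,V_i^n)$, we have $\Psi_N(\hat\theta_N)=0$ and $\Psi_N^{(n)}(\hat\theta_N^{(n)})=0$. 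An integrated Taylor expansion of $\Psi_N^{(n)}$ between the two estimators yields
$$\hat\theta_N^{(n)}-\hat\theta_N=-\big[\bar J_N^{(n)}\big]^{-1}\big(\Psi_N^{(n)}(\hat\theta_N)-\Psi_N(\hat\theta_N)\big),$$
where $\bar J_N^{(n)}=\int_0^1 \dot\Psi_N^{(n)}\big(\hat\theta_N+t(\hat\theta_N^{(n)}-\hat\theta_N)\big)\,dt$ is the averaged empirical information. Since $\Psi_N(\hat\theta_N)=0$, the problem reduces to (i) controlling the score discrepancy $\Psi_N^{(n)}(\hat\theta_N)-\Psi_N(\hat\theta_N)$ and (ii) showing $\bar J_N^{(n)}$ stays boundedly invertible.

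For (i), note that on $\{V\ge0\}$ the denominators $1+\sigma_0^2V\ge1$ are bounded below, so $\psi$ is $C^1$ in $(U,V)$ and its gradient entries are, after using this lower bound, dominated by polynomials in $|U|$ and $|V|$. A mean-value expansion in $(U,V)$ therefore gives, per index,
$$\big|\psi(U_i^n,V_i^n,\theta)-\psi(U_i,V_i,\theta)\big|\le P_i\big(|U_i-U_i^n|+|V_i-V_i^n|\big),$$
with $P_i$ a polynomial in $(|U_i|,|V_i|,|U_i^n|,|V_i^n|)$ evaluated along the segment. Finite moments of all orders for $V_i$ (from the linear-growth hypotheses and \eqref{eq:2.17}) together with Proposition \ref{pr:3.1} yield finite moments of all orders for $U_i$ and hence for $P_i$, those of $U_i^n,V_i^n$ being inherited from $U_i,V_i$ since $U_i-U_i^n$ and $V_i-V_i^n$ have bounded moments of every order by Lemma \ref{lem:1}. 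Applying Cauchy--Schwarz and then Lemma \ref{lem:1} with $p=2$ to the discretization factor gives the per-index bound $\mathbb{E}_{\theta_0}|\psi^n_i-\psi_i|\le C/\sqrt n$, and summing over $i$ after dividing by $N$,
$$\mathbb{E}_{\theta_0}\big|\Psi_N^{(n)}(\hat\theta_N)-\Psi_N(\hat\theta_N)\big|\le \frac{C}{\sqrt n}.$$

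The two assertions then follow by choosing the scaling. Granting that $\bar J_N^{(n)}$ is boundedly invertible --- the usual non-degeneracy of the Fisher information underlying consistency and asymptotic normality of the continuous-data estimator --- the displayed bound gives $\hat\theta_N-\hat\theta_N^{(n)}=O_{p_{\theta_0}}(n^{-1/2})=o_{p_{\theta_0}}(1)$ as $n\to\infty$, which is the first claim. For the second, multiply through by $\sqrt N$: the rescaled score discrepancy has $L^1$-norm at most $\sqrt N\cdot C/\sqrt n=C\sqrt{N/n}$, which tends to $0$ exactly under the hypothesis $n/N\to\infty$, whence $\sqrt N(\hat\theta_N-\hat\theta_N^{(n)})=o_{p_{\theta_0}}(1)$. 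I expect the main obstacle to be the bookkeeping behind (i)--(ii) at the random intermediate point defining $\bar J_N^{(n)}$: one must upgrade the pointwise-in-$\theta$ estimates above to versions holding locally uniformly in $\theta$ on a neighborhood of $\theta_0$ (using consistency of $\hat\theta_N$ to confine the segment), and show the averaged information there is bounded away from singularity uniformly in $N$, which is where Proposition \ref{pr:3.1} and the law of large numbers for the i.i.d.\ summands $\gamma_i(\theta),I_i(\sigma_0^2)$ must be combined.
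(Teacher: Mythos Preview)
The paper does not actually prove this proposition: after stating Lemma~\ref{lem:1} it simply writes ``From Lemma~\eqref{lem:1}, we derive'' and then states the proposition, moving immediately to numerical examples. So there is no argument in the paper to compare against; the result is asserted as a direct consequence of the $L^p$ discretization bound, presumably by analogy with the Brownian-motion case in Delattre, Genon-Catalot and Samson~\cite{Delattre}, which the present paper follows closely.

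Your proposal is exactly the standard Z-estimator route one would use to fill in that gap, and it is the natural expansion of ``from Lemma~\ref{lem:1} we derive'': smooth dependence of the score $\psi$ on $(U,V)$, a mean-value bound producing a polynomial envelope $P_i$, Cauchy--Schwarz against Lemma~\ref{lem:1}, and then inversion of the averaged empirical information. The caveats you flag at the end are genuine and are precisely the places where the paper is silent: (a) the bound on $\Psi_N^{(n)}(\hat\theta_N)-\Psi_N(\hat\theta_N)$ is taken at the random point $\hat\theta_N$, so you need the per-index estimate locally uniformly in $\theta$ and consistency of $\hat\theta_N$ to localize; (b) bounded invertibility of $\bar J_N^{(n)}$ requires a nondegenerate limiting Fisher information, which the paper never establishes. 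Neither is supplied in the paper, so your sketch already goes further than the paper does; just be aware that these two points are where the real work would lie if one were to make the argument rigorous.
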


Next, some examples of simulations will be given. Assume that $b(x,\psi)=\sum_{j=1}^{d}{\psi^j}b^j(x)$ with $d=1$ or $d=2$.
\subsection{When $b(x),\sigma(t)$ are both constants}
Consider $b(x)$ and $\sigma(t)$ are both constants, and suppose that $b=c\sigma$ with $c\neq{0}$ known. We then have
\begin{equation}\label{eq:4.3}
U_i=\int_0^T{\Bigg(\frac{d}{d\omega_t^H}\int_0^t{ck_H(t,s)ds}\Bigg)}dZ_i(s),~~~~~~
V_i=\int_0^T{{\Bigg(\frac{d}{d\omega_t^H}\int_0^t{ck_H(t,s)}ds\Bigg)}^2}d\omega_s^H.
\end{equation}
The estimate of $(\mu_0,\omega_0^2)$ can be expressed explicitly as
$$\hat{\mu}_N=\frac{\sum_{i=1}^{N}{U_i}}{\sum_{i=1}^{N}{V_i}}=\frac{\bar{U}_N}{\bar{V}_N},~~~~~~~~\hat{\sigma}_N^2=\frac{1}{\bar{V}_N^2}\Bigg(\frac{1}{N}{\sum_{i=1}^{N}(U_i-\bar{U}_N)^2-\bar{V}_N}\Bigg).$$
And we can see from the above that no matter what the expression of the drift $b(\cdot)$ , the distribution of the estimated value is both the same when $b(x)$ is a constant times $\sigma(t)$.

\begin{exmp}\label{exmp:1}
Consider a random effect Fractional Brownian Motion
$$dX_i(t)=\phi_idt+\sigma{dW_i^H(t)},~~~~X_i(0)=x_0,$$
where $b(X_t)=1$ and $\phi_i\sim\mathcal{N}(\mu,\sigma_0^2).$ We think about the different Hurst index: $H=0.7$ and $H=0.9$, and the two sets: $(\mu=1,\sigma_0^2=1)$ and $(\mu=-1,\sigma_0^2=1)$. For all numerical situation of this example on $[0,T]$, we take $\sigma=1$ and the step-size $\Delta{t}=0.001$. Table 1 and table 2 show the results when $H=0.7$ and $H=0.9$, respectively. The results indicate the accuracy of the two parameter estimators $\mu$ and $\sigma$, we can see that as $N$ and $T$ increase, the estimated mean and variance become better.
\end{exmp}

\begin{table*}
\centering
\caption{$\textit{Example 4.1}$: The estimate value
 $\hat{\mu}_N$ and $\hat{\sigma}_0^2$ are calculated from multiple data sets when $H=0.7$.}
\renewcommand\tabcolsep{25.0pt} % 调整表格列间的宽度
\begin{tabular}{cccc} \toprule
            \multirow{2}{*}{True value} &
            \multicolumn{1}{c}{N=30, T=5} & \multicolumn{1}{c}{N=50, T=5 } & \multicolumn{1}{c}{N=50, T=8} \\ \cline{2-4}
                            & Estimated value   & Estimated value  & Estimated value   \\ \hline
            $\mu=1$         & 1.0037  & 1.0386   & 1.0826    \\
            $\sigma_0^2=1$  & 0.6266  & 0.8379   & 0.9046    \\
            $\mu=-1$        & -0.8684  & -0.9856   & -0.8686     \\
            $\sigma_0^2=1$  & 0.7857  &  0.7523  &  0.9687  \\
            \bottomrule
  \end{tabular}
  \label{tbl:table-example}
\end{table*}

\begin{table*}
\centering
\caption{$\textit{Example 4.1}$: The estimate value
 $\hat{\mu}_N$ and $\hat{\sigma}_0^2$ are calculated from multiple data sets when $H=0.9$.}
\renewcommand\tabcolsep{25.0pt} % 调整表格列间的宽度
\begin{tabular}{cccc} \toprule
            \multirow{2}{*}{True value} &
            \multicolumn{1}{c}{N=30, T=5} & \multicolumn{1}{c}{N=50, T=5 } & \multicolumn{1}{c}{N=50, T=8} \\ \cline{2-4}
                            & Estimated value   & Estimated value  & Estimated value   \\ \hline
            $\mu=1$         & 0.8284  & 0.8911  & 0.9495    \\
            $\sigma_0^2=1$  & 0.6569  & 0.7064  & 0.7404     \\
            $\mu=-1$        & -1.0965  & -0.9787  & -0.9812     \\
            $\sigma_0^2=1$  & 0.6368  &  0.6849  &  0.7382   \\
            \bottomrule
  \end{tabular}
  \label{tbl:table-example}
\end{table*}

\subsection{General case}
\begin{exmp}
Consider a Fractional Ornstein-Uhlenbeck-type process
$$dX_i(t)=(\phi_i+\phi_iX_i(t))dt+\sigma{dW_i^H(t)},~~~~X_i(0)=x_0,$$
where $b(X_t)=x+1$ and $\phi_i\sim\mathcal{N}(\mu,\sigma_0^2).$ Similar to example \eqref{exmp:1}, we give different $H$ and two different parameter sets: $(\mu=1,\sigma_0^2=1)$ and $(\mu=-5,\sigma_0^2=1)$. For all situation of this example on $[0,T]$, we take $\sigma=1$ and the step-size $\Delta{t}=0.001$. Specific consequences are shown in table 3 and table 4. We can see that as $N$ and $T$ increase, the estimated mean and variance are closer to the true value. In particular, we choose the initial value $x_0$ as far away from zero as possible, because when $x_0=0$, the estimation of the mean and variance will be greatly affected due to computational problems.
\end{exmp}

\begin{table*}
\centering
\caption{$\textit{Example 4.2}$: The estimate value $\hat{\mu}_N$ and $\hat{\sigma}_0^2$ are calculated from multiple data sets when $H=0.7$.}
\renewcommand\tabcolsep{25.0pt} % 调整表格列间的宽度
\begin{tabular}{cccc} \toprule
            \multirow{2}{*}{True value} &
            \multicolumn{1}{c}{N=30, T=5} & \multicolumn{1}{c}{N=50, T=5 } & \multicolumn{1}{c}{N=50, T=8} \\ \cline{2-4}
                            & Estimated value   & Estimated value  & Estimated value   \\ \hline
            $\mu=5$         & 4.9794  & 5.1294  & 5.0827     \\
            $\sigma_0^2=1$  & 0.9654  & 0.9689  & 0.9754     \\
            $\mu=-1$        & -0.9845  & -1.0048  & -1.0435     \\
            $\sigma_0^2=1$  & 0.9455  & 1.0956  & 0.9661      \\
            \bottomrule
  \end{tabular}
  \label{tbl:table-example}
\end{table*}

\begin{table}
\centering
\caption{$\textit{Example 4.2}$: The estimate value $\hat{\mu}_N$ and $\hat{\sigma}_0^2$ are calculated from multiple data sets when $H=0.9$.}
\renewcommand\tabcolsep{25.0pt} % 调整表格列间的宽度
\begin{tabular}{cccc} \toprule
            \multirow{2}{*}{True value} &
            \multicolumn{1}{c}{N=30, T=5} & \multicolumn{1}{c}{N=50, T=5 } & \multicolumn{1}{c}{N=50, T=8} \\ \cline{2-4}
                            & Estimated value   & Estimated value  & Estimated value   \\ \hline
            $\mu=5$         & 5.0580   & 5.0024  & 4.9759     \\
            $\sigma_0^2=1$  & 0.9535   & 0.9841  & 1.0584     \\
            $\mu=-1$        & -1.1984   & -1.0349  & -0.9398     \\
            $\sigma_0^2=1$  & 1.0625   & 1.0499  & 0.9959     \\
            \bottomrule
  \end{tabular}
  \label{tbl:table-example}
\end{table}

\section{Conclusions and Discussions}
In this paper, we have discussed the maximum likelihood estimation of stochastic differential equations with random effects driven by Fractional Brownian Motion. For the special case, we have derived an exact expression of the parameters from the likelihood function and the special case is that the drift term is correlated with the random effects linearly. In addition, some numerical simulations have been finished. Only the one-dimensional case is discussed in this work, however, we could theoretically derive for the multidimensional case, although it might be more complicated.

There are some interesting extensions for this study. For example, we can consider other process-driven stochastic differential equations with random effects existing in every aspect of life. On the other hand, it is of interest to consider the diffusion term with random effects. This project is in progress.

\section*{Acknowledgements}
We would like to thank  Yubin Lu, Xiaoli Chen and Xiujun Cheng for discussions about computation. This work was supported by the NSFC grants 11531006, 11801192, 11771449 and NSF grant 1620449.

\section*{References}

\end{document}